\newtheorem{Ass}{Assumption}
\def\beP{\begin{prop}} \def\eeP{\end{prop}}
\def\beT{\begin{thm}} \def\eeT{\end{thm}}
\def\beR{\begin{rem}} \def\eeR{\end{rem}}
\def\beL{\begin{lem}} \def\eeL{\end{lem}}
\def\beXa{\begin{exmp}} \def\eeXa{\end{exmp}}
 \def\beC{\begin{cor}}  \def\FiW{Fisher Wright}
\def\eeC{\end{cor}}
\newtheorem{Qu}{Problem}
\def\eeD{\end{defn}} \def\beD{\begin{defn}}
\def\beQ{\begin{Qu}}
  \def\eeQ{\end{Qu}}
\def\eqs{\begin{eqnarray*}}
\def\ens{\end{eqnarray*}}
\def\H{\widehat}
\def\eqa{\begin{eqnarray}}
\def\ena{\end{eqnarray}}
\def\BEN{\begin{enumerate}}  \def\BI{\begin{itemize}}
\def\EEN{\end{enumerate}}   \def\EI{\end{itemize}} \def\im{\item} \def\Lra{\Longrightarrow}  \def\eqr{\eqref}  
\long\def\symbolfootnote[#1]#2{
\begingroup
\def\thefootnote{\fnsymbol{footnote}}\footnote[#1]{#2}
\endgroup}
\def\fn{\symbolfootnote}
\def\g{\gamma}   
  \def\th{\theta} 
\def\e{\epsilon}  \def\se{\sqrt{\eps}}
\def\k{\kappa} \def\l{\lambda} \def\a{\alpha} \def\b{\beta}
  \def\Thr{Therefore, }
\newcommand*{\la}{\langle}
  \def\ith{it holds that } 
\def\m{b}    \def\th{\theta}
   \def\s{\sigma} \def\F{\Phi} \def\f{\varphi} 
  \def\bc{\begin{cases} \def\fp{first passage }
  }    \def\wlg{w.l.o.g. }
\def\ec{\end{cases}} \def\expo{exponential } 
  \def\qu{\quad} \def\for{\forall} \def\T{\widetilde} \def\rv{random variable }
  \def\beq{\begin{eqnarray}} \def\eeq{\end{eqnarray}}
   \def\be{\begin{equation}} \def\ee{\end{equation}}
   \def\Eq{\Leftrightarrow}  \def\iec{it is easy to check }
     \def\proba{probability} \def\procs{processes}
   \newcommand{\pd}[2]{\frac{\partial #1}{\partial #2}}
\def\bea{\begin{multline*}}  \def\sats{satisfies }    \def\BD{birth and death processes} \def\br{branching }
\def\eea{\end{multline*}} \def\la{\label}  \def\mI{{\mathcal I}}    
 \def\LT{Laplace transform } 
\def\rv{random variable} 
\def\Fe{For example, } \def\sec{\section} \def\Frt{Furthermore, }
  \def\para{parameter }
  \def\R{{\mathbb R}} \def\FW{Freidlin-Wentzell}
  \def\I{\infty} \def\det{deterministic} \def\d{\delta}
\def\nne{nonnegative } \def\fr{\frac}   \def\t{\tau} 
 \def\sec{\section}    \def\xri{\xrightarrow} \def\fe{for example }  \def\wk{well known } 
 \def\fp{first passage }
\newcommand{\eps}{\varepsilon} 
  \def\ovl{\overline}  
  \def\app{approximation} 
 \def\satg{satisfying }
\newcommand{\figu}[3]{
\begin{figure}[!h]
\begin{center}
{\includegraphics[width=13 cm, height=8 cm]{#1}}
\end{center}

\vspace{-0.2cm}
\caption{\hspace{0.25cm}#2\label{f:#1}}
\end{figure}
}
    \def\Frt{Furthermore, }
\def\how{however }
 \newcommand{\LL}{\mathcal{L}}  
\def\ith{it holds that }  \def\se{\sqrt{\eps}}
\def\m{\mu}    \def\th{\theta}
\newcommand{\cd}{(\cdot)}
\begin{document}


 \title[Beyond Wentzell-Freidlin]{Beyond Wentzell-Freidlin: semi-deterministic approximations for diffusions
with small noise and  a repulsive critical boundary point}


\author{Florin Avram and Jacky Cresson}




\begin{abstract}
  We extend below a  limit theorem \cite{BCHK} for diffusion models used in population theory.
\end{abstract}

\KeysAndCodes{dynamical systems, small noise, linearization, semi-deterministic fluid approximation}{AMS 60J60}


\section{Introduction}\label{sec1}

 A diffusion with small noise is defined as the solution of
a stochastic differential equation (SDE) driven by  standard Brownian motion $B_t\cd$ (defined on a probability space and progressively measurable with respect to an increasing filtration)
\begin{equation}\label{main}
\bc dX^\eps_t =  \m(X^\eps_t) dt + \sqrt{\eps} \s(X^\eps_t) dB_t,\quad t\ge 0, 
\\
X^\eps_0=x_0=\eps,  X^\eps_t \in \mI:=(0,r) \ec
\end{equation}
where  $ 0< r\leq +\infty$, $\eps>0, \m:\mI \mapsto \R$, $\sigma:\mI \mapsto \R_{>0}$ and $\m,\s$ satisfy  conditions ensuring that \eqr{main} has a strong unique solution (for example,  $\m$ is locally Lifshitz and  $\s$ satisfies the Yamada-Watanabe conditions \cite[(2.13), Ch.5.2.C]{KS}).\fn[4]{For reviews discussing the existence of strong and weak solutions, see for example \cite{breiman1992probability,helland1996one,cherny2005singular}.}

When $\eps \to 0$, \eqr{main} is a small perturbation of the dynamical system/ordinary differential equation (ODE):
\begin{equation}\label{detmain}
\frac{dx_t}{dt}=\m(x_t),\quad t\ge 0,
\end{equation}
 which will also be supposed to admit a unique continuous solution $x_t, t \in \R_+$ subject to any $x_0\in (0,r)$, and the flow of which
will be denoted by $\phi_t(x)$.

A basic result in the field is  the ``fluid limit",  which  states that when \eqr{main} admits a strong unique solution, the effect of noise  is negligible as $\eps\to 0$, on any {\bf fixed time interval}  $[0,T]$:

\begin{thm} \label{t:Kurtz}[Freidlin and Wentzell] \cite[Thm 1.2, Ch. 2.1]{FW}
Let  $X^\varepsilon_t$  satisfy \eqr{main}, assume $\m,\s$ satisfy the Lifshitz condition,      and  that $X^\eps_0\xrightarrow[\eps\to 0]{\P} x_0\in \R_+$, where $\xrightarrow[\eps\to 0]{\P}$ denotes convergence in probability. Then, for any fixed $T$
\[
\sup_{t\leq T}|X^\varepsilon_t-x_t|\xrightarrow[\eps\to 0]{\P} 0,
\]
where $x_t$ is the solution of \eqref{detmain} subject to the initial condition $x_0$.\fn[5]{For other  deterministic limit theorems for  one-dimensional diffusions, see also  Gikhman and Skorokhod \cite{skorokhod2009asymptotic}, Freidlin and Wentzell \cite{FW},  Keller et al. \cite{keller1988asymptotic},  and Buldygin et al. \cite{buldygin2007prv}.}
\end{thm}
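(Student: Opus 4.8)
The plan is to reduce everything to Gronwall's inequality applied to the pathwise difference $X^\eps_t - x_t$, with the random fluctuation controlled by a maximal martingale inequality. First I would subtract the integral form of \eqr{detmain} from that of \eqr{main}, obtaining for each $t$
\[
X^\eps_t - x_t = (X^\eps_0 - x_0) + \int_0^t \bigl(\m(X^\eps_s) - \m(x_s)\bigr)\, ds + \se \int_0^t \s(X^\eps_s)\, dB_s .
\]
The Lipschitz hypothesis on $\m$, with constant $K$ say, bounds the drift difference by $K \int_0^t |X^\eps_s - x_s|\, ds$, so that after taking absolute values and then the supremum over $[0,t]$ one gets
\[
\sup_{u \leq t} |X^\eps_u - x_u| \leq |X^\eps_0 - x_0| + K \int_0^t \sup_{v \leq s} |X^\eps_v - x_v|\, ds + M^\eps_T,
\]
where $M^\eps_T := \se \sup_{u \leq T}\bigl|\int_0^u \s(X^\eps_s)\, dB_s\bigr|$. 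Gronwall's lemma then yields the deterministic-looking bound $\sup_{t \leq T}|X^\eps_t - x_t| \leq \bigl(|X^\eps_0 - x_0| + M^\eps_T\bigr) e^{KT}$, and the whole statement collapses to showing that both $|X^\eps_0 - x_0|$ and $M^\eps_T$ tend to $0$ in probability.

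The first term vanishes by the hypothesis $X^\eps_0 \xrightarrow[\eps\to 0]{\P} x_0$, so the crux is the noise term $M^\eps_T$. The stochastic integral is a (local) martingale, and by Doob's inequality (or Burkholder-Davis-Gundy) one has $\mathbb{E}\bigl[(M^\eps_T)^2\bigr] \leq 4\eps\, \mathbb{E}\bigl[\int_0^T \s(X^\eps_s)^2\, ds\bigr]$. If the right-hand expectation stays bounded as $\eps \to 0$, then $M^\eps_T \to 0$ in $L^2$, hence in probability, and Chebyshev closes the argument. The substantive content of the proof is therefore the a priori control of $\mathbb{E}[\int_0^T \s(X^\eps_s)^2\, ds]$.

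That control is the main obstacle, and it is genuinely delicate on the open interval $\mI=(0,r)$ because $\s$ need only be locally, not globally, bounded and the process could in principle approach the boundary. I would handle it by localization: for a compact $[\de, N] \subset \mI$ introduce the exit time $\t_N := \inf\{t \geq 0 : X^\eps_t \notin [\de,N]\}$, on which $\s$ is bounded by its maximum over the compact, so the Doob estimate above applies verbatim to the stopped process $X^\eps_{t \wedge \t_N}$ and gives $\sup_{t \leq T}|X^\eps_{t \wedge \t_N} - x_{t \wedge \t_N}| \to 0$ in probability. One then removes the localization by noting that the deterministic flow $\phi_t(x_0)$ stays in a compact sub-interval of $\mI$ on $[0,T]$, so that, under the Lipschitz/linear-growth hypotheses and the attendant uniform moment estimate $\sup_{\eps \leq 1}\mathbb{E}[\sup_{t \leq T}|X^\eps_t|^2] < \infty$ (itself obtained by the same Gronwall-plus-BDG scheme applied to $X^\eps$ alone), the stopped and unstopped processes coincide on $[0,T]$ with probability tending to one as $\eps \to 0$. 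Combining the localized convergence with this vanishing escape probability yields $\sup_{t\le T}|X^\eps_t - x_t| \xrightarrow[\eps\to 0]{\P} 0$, as claimed.
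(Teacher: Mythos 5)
Your argument is correct and is the classical proof of this result: the paper itself states Theorem \ref{t:Kurtz} as a citation of \cite[Thm 1.2, Ch. 2.1]{FW} and gives no proof of its own, so there is nothing internal to compare against beyond the footnoted derivation of \eqref{eBC}, which uses exactly your Gronwall-plus-It\^o/Doob scheme (there in $L^2$, with $\|\sigma\|_\infty<\infty$ replacing your localization). The one point to present carefully is the removal of the localization: the uniform second-moment bound controls escape to $+\infty$ but not approach to the left boundary of $\mI=(0,r)$, so the clean way to conclude is the bootstrap you hint at --- choose the compact $[\de,N]$ to contain a neighbourhood of the deterministic trajectory $\{x_t: t\le T\}$, and observe that on the event where the stopped difference is smaller than the gap, the exit time $\t_N$ exceeds $T$, whence the stopped and unstopped statements coincide with probability tending to one.
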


{ Although interesting, this result does not give any understanding of the asymptotic behavior of the diffusion process for  times converging to infinity;  in particular,  it does not tell us how the diffusion travels between equilibrium points (which requires times converging to infinity). Following \cite{barbour2015escape,BCHK}, we go here  beyond Theorem \ref{t:Kurtz}, by analyzing the way a diffusion process leaves an unstable equilibrium point. Precisely, we make the following assumptions:}

\begin{Ass} \la{a:A}
Suppose from now on that $l=0,\m(0)=0, \m'(0)>0$, which makes
zero an {\bf unstable equilibrium point of}  \eqref{detmain} and of \eqref{main}.
\end{Ass}

 {Note that under Assumption \ref{a:A}, the \FW \; theorem \ref{t:Kurtz} implies that the
solution of \eqref{main} started from a small positive initial condition $X^\eps_0=\eps>0$ converges to zero
on any fixed bounded interval
\[
\sup_{t\leq T}\big|X_t^\eps\big| \xrightarrow[\eps\to 0]{\P} 0, \qquad \forall T\ge 0.
\]

\begin{Ass} \la{a:B}
Put now $a(x)=\s^2(x)$,   and assume
that  $a(0)=\s(0)=0, a'(0)>0$, which makes $0$ a
singular point  of the diffusion \eqref{main}-- see for example \cite{cherny2005singular}.
\end{Ass}

\beR Note that $a'(0)>0$ rules out important population theory models like the
 linear Gilpin Ayala
 diffusion \cite{liu2015sufficient} with \begin{equation} \la{lGA} \m(x)=\g x \Big(1  -  \; (\fr{x}{x_c})^\a \Big), \s(x)=\sqrt{\eps} x \Eq a(x) =\eps x^2, \g >0, x_c>0, \a >0,\end{equation} which includes by setting  $\a=1$ another favorite, the logistic-type Verlhurst-Pearl diffusion \cite{Vallois,evans2015protected,hening2018optimal}.
\eeR

Recently,  a new type of limit theorem  \cite{BCHK} was discovered when $T \to \I$ under Assumptions  \ref{a:A}, \ref{a:B}, when $x_0^\eps $ converges to the unstable { equilibrium} point  of  \eqr{detmain}.
Following \cite{BCHK}, let \begin{equation} T^\eps := \dfrac 1 {\m'(0)} \log \dfrac 1 {\eps} \la{T}\end{equation}denote the solution of the equation $ \phi_{t,lin} (x_0)= x_0 e^{\m'(0) t}=1$ {  where $\phi_{t,lin} (x_0)$ is the flow of the linearized system of \eqref{detmain} in $0$}, and divide
the evolution of the process in three time-intervals:
\begin{equation} [0,t_c:=c T^\eps],[t_c, t_1:=T^\eps],[ t_1,\I), c \in (1/2,1)\end{equation}(the restriction $c>1/2$ is used in  \eqr{Gro}).

 It turns out   that this partition allows separating   the life-time of  diffusions with small noise, exiting an unstable point of the fluid limit,   into  three periods with distinct behaviors:
\figu{FW2}{6 paths of the  Kimura-Fisher-Wright diffusion \(dX_t = \g X_t (1 - X_t) dt + \sqrt{\eps X_t(1-X_t)} dB_t\), where $x_c=1$ is an  exit boundary, with $\eps=.01$. On the right,  three stages of evolution may be discerned}{0.7}

  \BEN
 \im  In the first stage,  the process leaves the neighborhood of the unstable point. The linearization of the SDE implies that here  a Feller branching approximation may be used, and this produces a certain exit law $W$ which will be carried over to the next stage as a  (random) initial condition.
 \im In the second ``semi-deterministic stage" (meaning that paths cross very rarely here),  the system moves towards its first  stable critical point $x_c$, following the trajectories of its fluid limit \eqr{detmain}, again over a time whose length converges to $\I$. A further renormalization produces  here the main result, the limit exit law \eqr{t0}.
   \im In the third stage, after the SDE has approaches the stable critical point
  of the fluid limit, ``randomness is regained" -- see crossings of paths in figures   \ref{f:FW2} and  \ref{f:lF2}); (if the  process may reach and overshoot the stable critical point, convergence towards
  a stationary distribution may occur).
  \EEN

The following result  was obtained   first in \cite{BCHK}, for the "Kimura-Fisher-Wright"  diffusion,
and extended subsequently to diffusions with bounded volatility.

\begin{thm}\label{mainT} {\bf Fluid limit with random initial conditions} \cite{BCHK}. Let $X^\eps_t$ satisfy Assumption \ref{a:A}, \eqref{main}, and $X^\eps_0=\eps>0$.
Suppose in addition
that
 the   diffusion coefficient $\sigma(\cdot)$ is continuous and {\bf  bounded},
 as well as its  first derivative,
and that $\m(\cdot)$ satisfies the following drift condition:
\begin{equation*}\label{drift}
\big|\m(y)-\m(x)\big|\leq \m'(0) |y-x|, \quad x,y\in \R_+.
\end{equation*}
Let $Y_t$ denote the solution to the {\bf scaled linearized equation}
\begin{equation}\label{Ylin}
d Y_t= \m'(0) Y_t dt + \sqrt{a'(0) Y_t} dB_t, Y_0=1 \Lra Y_t =  1 + \int_0^t \m'(0) Y_s ds + \int_0^t \sqrt{a'(0) Y_s} dB_s,
\end{equation}
known as  {\bf Feller branching diffusion}.

Then, \ith:
\BEN \im[(A)]  \begin{equation}\label{t0}
 X^\eps_{T^\eps} \xrightarrow[\eps\to 0]{\P} \T \phi(W),
\end{equation}
where
\BEN \im[(i)]
the random variable $W$ is the a.s.  martingale limit
\begin{equation}\label{W}
W:=\lim_{t\to\infty} e^{-\m'(0)t} Y_t= 1+\int_0^\infty e^{-\fr {\m'(0)} 2 s} \sqrt{a'(0) Y_s}dB_s
\end{equation}
\im[(ii)] $\T \phi(x)$  denotes the {\bf  limit of the deterministic flow pushed first backward in time by the linearized deterministic flow} $\phi_{t,lin}(x)=x e^{\m'(0) t}$ near the unstable critical point $0$
\begin{equation}\label{Hxlim}
\T \phi(x) =\lim_{t\to\infty} \phi_t\big(\phi_{-t,lin}(x)\big)
=  \lim_{t\to\infty} \phi_t\big(xe^{-\m'(0) t}\big), \quad x\ge 0.
\end{equation}

\EEN

\im[(B)]  Also, for any $T>0$,
\begin{equation}\label{newapp}
\sup_{t\in [0,T]}\big| X^\eps_{T_{\eps} + t} -x_t \big| \xrightarrow[\eps \to 0]{\P} 0,
\end{equation}
where $x_t$ is the solution of \eqref{detmain} subject to the initial condition
$
X_0 = \T \phi(W)$.
\EEN

\end{thm}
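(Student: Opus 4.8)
The plan is to realize the limit \eqref{t0} as the composition of a \emph{stochastic linearization} on the first window $[0,t_c]$, $t_c=cT^\eps$, and a \emph{deterministic fluid approximation} on the second window $[t_c,T^\eps]$, with $c\in(1/2,1)$. The algebraic device that welds the two stages is the pair of identities that follow at once from the definition \eqref{T} of $T^\eps$:
\[
e^{\m'(0)\,t_c}=\eps^{-c},\qquad e^{-\m'(0)(T^\eps-t_c)}=\eps^{\,1-c}.
\]
The first says that the growth rate $\m'(0)$ inflates the initial scale $\eps$ up to $\eps^{\,1-c}$ by time $t_c$; the second says that this very factor $\eps^{\,1-c}$ is exactly the backward-linearized shift $e^{-\m'(0)s}$, $s=T^\eps-t_c$, appearing in the definition \eqref{Hxlim} of $\T\phi$. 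Thus $\T\phi$ is forced on us by the bookkeeping, and the proof reduces to controlling the two approximations.

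\textbf{Stage 1: linearization and the exit law $W$.} Setting $U^\eps_t=X^\eps_t/\eps$ and dividing \eqref{main} by $\eps$, the drift becomes $\m(\eps U^\eps)/\eps\to\m'(0)U^\eps$ and, since $a(0)=0,\ a'(0)>0$, the volatility becomes $\s(\eps U^\eps)/\sqrt\eps\to\sqrt{a'(0)U^\eps}$, so $U^\eps$ should converge to the Feller branching diffusion \eqref{Ylin}. I would make this precise by coupling $X^\eps$ and $\eps Y$ through the same $B$ and estimating $D_t=X^\eps_t-\eps Y_t$. The drift condition gives $|\m(X^\eps_s)-\m(\eps Y_s)|\le\m'(0)|D_s|$, and a Gr\"onwall argument shows that the drift linearization error contributes only a term of order $(\eps^{\,1-c})^2$, negligible against the main scale $\eps^{\,1-c}$ as long as $c<1$ keeps $X^\eps$ inside the linearization region. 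The genuinely delicate term is the difference of the square-root volatilities, $\sqrt\eps\,\sqrt{a'(0)}\,(\sqrt{X^\eps_s}-\sqrt{\eps Y_s})$, which is only H\"older-$1/2$ in $D_s$ near $0$ and escapes any Lipschitz/Gr\"onwall bound; it must be handled by a Yamada--Watanabe comparison. Granting the coupling, $M_t:=e^{-\m'(0)t}Y_t$ is an $L^2$-bounded martingale with $\mathbb{E}[(M_t-W)^2]=\fr{a'(0)}{\m'(0)}e^{-\m'(0)t}$, so $M_t\to W$ as in \eqref{W}, and at the endpoint
\[
X^\eps_{t_c}=\eps Y_{t_c}\approx \eps\,e^{\m'(0)t_c}\,W=\eps^{\,1-c}W,
\]
the discrepancy being of smaller order than $\eps^{\,1-c}$ once $t_c=cT^\eps\to\I$ makes the martingale convergence effective.

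\textbf{Stage 2: fluid approximation and identification of $\T\phi(W)$.} On $[t_c,T^\eps]$ I would compare $X^\eps$ with the deterministic flow $\psi_t:=\phi_{t-t_c}(X^\eps_{t_c})$ started from the small random value $X^\eps_{t_c}$. With $D_t=X^\eps_t-\psi_t$, the drift condition gives $|D_t|\le \sup_{s\le T^\eps}\bigl|\int_{t_c}^s\sqrt\eps\,\s(X^\eps_r)\,dB_r\bigr|+\m'(0)\int_{t_c}^t|D_s|\,ds$, whence by Gr\"onwall
\[
\sup_{t\le T^\eps}|D_t|\le C\,e^{\m'(0)(T^\eps-t_c)}\,\sup_{s\le T^\eps}\Bigl|\int_{t_c}^{s}\sqrt\eps\,\s(X^\eps_r)\,dB_r\Bigr|.
\]
Since $\s$ is bounded, Doob's inequality gives $\sup_{s\le T^\eps}\bigl|\int_{t_c}^s\sqrt\eps\,\s(X^\eps_r)\,dB_r\bigr|=O_{\P}(\sqrt{\eps T^\eps})$; combined with $e^{\m'(0)(T^\eps-t_c)}=\eps^{-(1-c)}$ the bound is $O_{\P}(\eps^{\,c-1/2}\sqrt{\log(1/\eps)})$, which vanishes \emph{exactly} when $c>1/2$ --- this is where the restriction enters. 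It remains to evaluate the flow: by Stage 1 and the welding identity, with $s=T^\eps-t_c$,
\[
X^\eps_{T^\eps}\approx\psi_{T^\eps}=\phi_{s}\bigl(X^\eps_{t_c}\bigr)\approx\phi_{s}\bigl(W\,e^{-\m'(0)s}\bigr),
\]
and since $s\to\I$ as $\eps\to0$, the right-hand side tends to $\lim_{s\to\I}\phi_s(We^{-\m'(0)s})=\T\phi(W)$, which is precisely \eqref{Hxlim}; this proves \eqref{t0}.

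\textbf{Part (B) and the main obstacle.} Part (B) then follows by restarting at time $T^\eps$: by the \str\ the post-$T^\eps$ process is again a diffusion \eqref{main} issued from $X^\eps_{T^\eps}\xrightarrow[\eps\to0]{\P}\T\phi(W)$, so the \FW\ Theorem \ref{t:Kurtz} (in its version with a random, convergent-in-probability initial datum) applied on the fixed window $[0,T]$ upgrades \eqref{t0} to the uniform statement \eqref{newapp}, using continuous dependence of \eqref{detmain} on its initial condition. I expect the main obstacle to be Stage 1 rather than Stage 2: the Feller-branching coupling compares two processes that both grow like $e^{\m'(0)t}$ over a window of diverging length $t_c\to\I$, so the accumulated error must be shown to stay below this exponential blow-up \emph{despite} the non-Lipschitz square-root volatility; carrying a Yamada--Watanabe estimate uniformly over $[0,t_c]$ and pairing it with a quantitative rate for $M_{t_c}\to W$ is the technical heart of the argument. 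A subsidiary point, to be dispatched first, is the well-posedness of \eqref{Hxlim}: using the hyperbolicity $\m'(0)>0$ of the repulsive point $0$, one must show that the limit defining $\T\phi$ exists and is continuous, so that $\T\phi(W)$ is a genuine random variable.
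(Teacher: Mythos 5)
Your proposal follows essentially the same route as the paper's sketch: the same splitting at $t_c=cT^\eps$, the same blow-up $X^\eps_t/\eps$ and linearization to the Feller branching diffusion with martingale limit $W$, the same Gr\"onwall/bounded-$\sigma$ control on $[t_c,T^\eps]$ producing the rate $\eps^{c-1/2}\sqrt{\log(1/\eps)}$ (hence the restriction $c>1/2$), the same welding identity $e^{-\m'(0)(T^\eps-t_c)}=\eps^{1-c}$ identifying the limit as $\T\phi(W)$, and the same time-shift plus Theorem \ref{t:Kurtz} for part (B). The only cosmetic difference is that you run a pathwise Gr\"onwall with Doob's maximal inequality where the paper applies It\^o's formula to $(\delta^\eps_t)^2$ and bounds the second moment, and you correctly isolate as the technical heart the uniform-in-$t_c$ Yamada--Watanabe coupling that the paper delegates to Lemma \ref{L1} of \cite{BCHK}.
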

\beR
Note that $W$ depends only on the local
parameters $\m'(0), a'(0)$ of the diffusion  at the critical point. Assume from now on, without loss of generality that $a'(0)=1$ (recalling however that this is the only part of the stochastic perturbation that survives in the limiting regime), and let
\begin{equation} \la{g} \g:= \m'(0)>0 \end{equation}
denote the Malthusian parameter.

In the one -dimensional case, the \LT \ of $W_{\infty}$ is \wk \cite{Pardoux} and easy to compute. Indeed, letting  $u_t(\l)=-\fr 1 x \log(E_x[e^{-s Y_t}]$ denote the cumulant transform of this branching process,
 and solving the Riccati-type equation
\[\pd{u(t)}{t}=\g u(1-\fr{a'(0)}{2 \g} u), \] yields an explicit expression:
\begin{equation} \la{FLT}
E_x e^{-s Y_t} = \exp \left(- \frac{x s  e^{\g t}}{1+\frac{s a'(0)} {2\g }
(e^{\g t}-1)}\right), \quad s>0
\end{equation}see, e.g.,  \cite[Ch 4.2, Lem. 5, pg. 24]{Pardoux}.

One may conclude  
from the explicit \eqr{FLT} that
\begin{equation} \la{LTF}
E_1 e^{-s W_{\infty}}=\lim_{t \to \I} u_t(s e^{-\g t}) =
\exp \left(-\frac{2  s \g / a'(0)   }{2\g/ a'(0) +s    }\right),
\end{equation}and one may check that $W_{\infty}$  is  a Poisson sum with parameter $ {2\g/a'(0) }$ of independent \expo random variables \be W_{\infty}=\sum_{j=1}^{N_{2\g }} \tau_j, \tau_j\sim \mathrm{Expo}(2\g/a'(0) ). \ee
 \eeR
 \beR  Computing the limit $\f(s):=E e^{-s W_{\infty}}=\lim_{t\to\infty}E e^{-s e^{-\g t} Y_t }$  is a famous problem in the theory  of supercritical
branching \procs.
Recall that \BEN \im
For Galton-Watson \procs, $\f(s)$ \sats the {\bf Poincar\'e - Schroeder functional equation}\be \la{Sch} \f(m s)= \H p (\f(s)), m= \H p'(1) \ee where $\H p(s)$ is the probability generating function of the progeny  \cite[I.10(5), Thm I.10.2]{AN}.
\im For continuous time \br \procs, letting $\Psi(s)=\H p(s)-s$ denote the branching mechanism, and $\th(s)=\f(s)^{-1}$ denote the functional inverse,
\ith
\be \th(1)=0, \fr{\th'(s)}{\th(s)}=\fr{\Psi'(1)}{\Psi(s)}, 0\leq s \leq  1  \Lra \th(s)=(1- s) e^{-\int_s^1 ( \fr{\Psi'(1)}{\Psi(u)}+\fr 1 {1-u}) du}, 0\leq s \leq  1\ee
see \cite[III.7(9-10), p.112]{AN} and
\be \la{ctP} s \f'(s)=\Psi'(1)^{-1}\Psi(\f(s)), \f(0)=1.\ee

\Fe for binary splitting with branching mechanism $\Psi(s)=  s^2-s$, we find
\begin{equation*} s \f'(s)=\f(s)(\f(s)-1)\Lra \f(s)=\frac{1 }{1 +s     }, \th(s)=\fr{1-s} {s}\end{equation*}
with  $W_\I$   \expo with parameter $1$,
and for geometric branching with \para $1-u$ we find
\begin{equation*} \th(s)^{1- 2u}=\fr{(1-s)^{\fr 1{u}}} {(1- u(1+ s))^{\fr 1{1-u}}} .\end{equation*}
The example of $k$-ary fission is also explicit-- see \cite[p. 218]{bingham1988limit} and
\cite[p. 119]{karlin1968embeddability}.

\beQ Extend the results of \cite{BCK} from birth-death to Markov discrete
space with finite number of transitions upwards and downwards.  Solve numerically the Schroeder equation.
\eeQ

\im

For the continuous state case, letting $-\kappa(s)=\ln\Big(E[e^{-s W_{\infty}}]\Big)$ denote the logarithm of the \LT
and  $\Psi(s)$ denote the branching mechanism, \ith
\be \la{difP} s \k'(s)=\Psi'(0)^{-1}\Psi(\kappa(s))\ee
see \cite[Cor. 4.3]{bingham1976continuous} and also the Appendix, for the multi-type case.

 Also \cite[Thm 4.2]{bingham1976continuous}, \ith  the functional inverse
$\th(s)=\kappa(s)^{-1}$ satisfies
\be \fr{\th'(s)}{\th(s)}=\fr{\Psi'(0)}{\Psi(s)}, 0\leq s \leq  \Psi'(0)  \Lra \th(s)= s e^{\int_0^s ( \fr{\Psi'(0)}{\Psi(u)}-\fr 1 u) du}, 0\leq s \leq  \Psi'(0).\ee

\Fe for the Feller branching diffusion with branching mechanism $\Psi(s)= \g s- \fr 1 2 s^2$, we find
\[\th(s)= \frac{2 \gamma  s}{2 \gamma -s}, \kappa(s)=\frac{2\g s    }{2\g +s     }.\]

\EEN

 \eeR
\beR  The main part of Theorem \ref{mainT} is the equation \eqref{t0} which identifies the limit after
the second stage
\begin{equation} X^\eps_{T^\eps}=\Phi_{T^\eps}^\eps(\eps) \xrightarrow[\eps\to 0]{\P} \lim_{t\to\infty} \phi_t\big(\phi_{-t,lin}(W)\big)=\T \phi(W), \la{tph}
\end{equation}$\Phi_{t}^\eps(x)$ denotes the flow generated by
the SDE \eqref{main}.

Note that $\T \phi$ depends only on the dynamical system $\m$.
By \cite[Prop. 4.1]{BCHK},  it is a nontrivial solution of the ODE \begin{equation} {\m'(0) x} \T \phi'(x)=   \m(\T \phi(x)),  \T \phi(0)=0 \la{tph1}\end{equation}
which is equivalent to the  {\bf Poincar\'e functional equation}
\be \la{tph2} \T \phi(x e^{\g t}) =\phi_t(\T \phi(x)) \Eq   \m(\T \phi(x))=\T \phi(\g x) \ee
 arising in Poincar\'e conjugacy relations for dynamical systems.
Interestingly, this is the same type of equation as \eqr{difP}, minus the restriction that $v \cd$ be a Bernstein function.

The inverse $w(x)=\T \phi(x)^{-1}$ when $\T \phi(x)$
\sats \eqr{tph1} is given by
\begin{equation}\label{Gdef}
w(x) = x e^{\int_0^x \left(\frac \g{\m(u)}-\frac 1{ u}\right)du }, 0\leq x \leq \g.
\end{equation}

 \eqr{tph1}, \eqr{tph2} suggest  possible generalizations to multidimensional diffusions
 (and possibly to jump-diffusions (where a  CBI might replace  the Feller diffusion in the limit).



\eeR

\beR Part 2. of Theorem \ref{mainT} follows immediately    by a simple change of time:   letting $\widetilde X^\eps_t=X^\eps_{T^\eps+t}$, and $\widetilde{B}_t = B_{T^\eps + t} - B_{T^\eps}$
one obtains from \eqref{main}
\[
\widetilde X^\eps_t =\widetilde X^\eps_{0} +\int_0^t f(\widetilde X^\eps_s)ds+
 \int_0^t \sqrt{ \eps \sigma(\widetilde X^\eps_s)}d\widetilde{B}_s,
\]
and the result follows from  \eqref{t0} by the fluid convergence Theorem \ref{t:Kurtz}. This part may be viewed as describing  ``short transitions" (invisible on a long time scale) between the second and third stages.
\eeR

\beR  \la{r:ph} The limit \eqr{t0} describing the position after the second stage  has  been established in \cite{BCHK} for
  one dimensional distributions with bounded $\s(x)$.    This assumption seems \how restrictive, since for typical diffusions whose fluid limit $\phi_t(x)$ admits a stable critical point $x_c$, the  probability  of leaving the neighborhood of the  stable point $x_c$ is very small as $\eps \to 0$. This intuition is confirmed by simulations  --see  Figure  \ref{f:lF2}.

\eeR

The remark \ref{r:ph} suggests the relation of our problem to that of  studying the maximum of $X_t$.  

More precisely, we would like  to establish and exploit the plausible fact that $\for \th >1$}
\begin{equation} \lim_{\eps \to 0} P[ T_{\th x_c }< T^\eps |X_0=\eps]=\lim_{\eps \to 0} P[\sup_{0\leq t\leq  T^\eps} X^\eps_t>\th x_c |X_0=\eps]=0, \la{key}\end{equation}where $x_c$ is the closest  critical point
 towards which the diffusion is attracted, and $T_{\th x_c }$ is the hitting time of ${\th x_c }$;  clearly, \eqr{key} renders unnecessary the assumption
that the diffusion coefficient $\sigma(\cdot)$ be  bounded.

 A weaker statement than \eqr{key}, but still sufficient for a slight extension,   is provided in the elementary Lemma \eqr{det1} below.

{\bf Contents}. The  paper is organized as follows. In Section \ref{s:ext} we offer,  based on   Lemma \ref{det1},  a slight extension of Theorem \ref{mainT} of \cite{BCHK}. A  conjecture (see Problem \ref{c:1}) is presented here as well. We  illustrate our new result with the  example  of the logistic Feller diffusion in Section \ref{s:LF}. We include for convenience
in Section \ref{s:sk} an outline of the remarkable paper \cite{BCHK}.
\section{An extension of Theorem \ref{mainT} \cite{BCHK} \la{s:ext}}

Recall now from \cite{BCHK}
that   the restrictive condition $\|\sigma\|_\infty < \I$ is used for proving that\fn[3]{Let us recall the proof of this important piece of the puzzle. Let $\Phi_{s,t}(x)$, $\phi_{s,t}(x)$ denote the stochastic and deterministic flows  generated respectively by the SDE \eqref{main} and
ODE \eqref{detmain},  put $\Phi^\eps_t :=  \Phi_{t_c,t_c+t }(X^\eps_{t_c})$, $\phi_t := \phi_{t_c,t_c+t }(X^\eps_{t_c})$ for brevity,
and define  $\delta^\eps_t = \Phi^\eps_t-\phi_t$.  Subtracting equations \eqref{main} and \eqref{detmain}  and applying the It\^o formula:
\begin{align*}
E \big(\delta^\eps_t\big)^2  =\, &  E \int_0^t 2\delta_s \big(\m(\Phi^\eps_s) - \m(\phi_s)\big)ds
+\int_0^t \eps E   \sigma(\Phi^\eps_s) ds  \leq
 \int_0^t 2 \g E (\delta_s)^2  ds  +  \eps t \|\sigma\|_\infty, t \in \R_+
\end{align*}
where  assumption \eqref{drift} was used.
By Gr\"{o}nwall's inequality
\begin{align} \la{Gro}
E \Big(\Phi_{t_c,t_1}(X^\eps_{t_c}) - \phi_{t_c, t_1}(X^\eps_{t_c}) \Big)^2 =\;  &
E \big(\delta^\eps_{t_1-t_c}\big)^2 \leq
C_1 \eps t_1 e^{2\gamma(t_1-t_c)}\leq C_2  \eps^{2 c-1} \log \frac 1 \eps \xrightarrow[\eps\to 0]{} 0
\end{align}
where the convergence holds since $c\in (\frac 1 2, 1)$.}

\begin{equation} \la{eBC} \|\sigma\|_\infty < \I, c \in (1/2,1) \Lra
  \Phi_{t_c,t_1}(X^\eps_{t_c}) - \phi_{t_c, t_1}(X^\eps_{t_c})    \xrightarrow[\eps \to 0]{L^2} 0,
\end{equation}where $t_c=c T^\eps$.

We will show now that it is possible to remove the condition $\|\sigma\|_\infty < \I$ in \eqr{eBC},  if only convergence in probability is needed,  by  assuming rather weak and natural conditions on the scale function $s\cd$. Recall that the scale function $s$ is  defined (up to two integration constants) as an arbitrary increasing solution of the equation $\LL s(x)=0$, where $\LL$ is the generator operator of the diffusion, and that this  function is continuous -- see \cite[Ch. 15, (3.5), (3.6)]{KT}
 (noting that \cite{KT} denote the scale function by $S\cd$).

\begin{lem} \la{det1} Assume that $0$ is an attracting boundary
 and that $r$ is an unattracting  boundary, i.e. that $, s(0_+)>-\I, s(r-)=\I$. 
Put
\begin{equation} \ovl X^\eps=\sup_{0\leq t <\I} X^\eps_t,\end{equation}where $X^\eps$ is defined in \eqr{main}. Then:

 \begin{equation} \la{M1} \text{(A)   } \quad \for \eps, \lim_{M \to r} P_\eps[\ovl X^\eps > M] = \lim_{M \to r} \fr{s(\eps)-s(0)}{s(M)-s(0)}= (s(\eps)-s(0))\lim_{M \to r} \fr{1}{s(M)-s(0)}=0,\end{equation} 
and
\begin{equation} \la{eps1}  \text{(B)   } \quad c \in (1/2,1) \Lra \Phi_{t_c,t_1}(X^\eps_{t_c}) - \phi_{t_c, t_1}(X^\eps_{t_c})    \xrightarrow[\eps \to 0]{P} 0. \end{equation}\end{lem}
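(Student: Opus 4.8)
\emph{Part (A).} The plan is to read off the overshoot probability from the scale-function description of one-dimensional first passage. Since the paths of \eqr{main} are continuous and $X^\eps_0=\eps<M$, the event $\{\ovl X^\eps>M\}$ agrees up to a null set with $\{T_M<\I\}$, where $T_M$ is the first hitting time of $M$; and because $0$ is attracting, any path that never reaches $M$ is absorbed at, or converges to, $0$, so $\{T_M<\I\}=\{T_M<T_0\}$ a.s. Applying optional stopping to the bounded martingale $s\big(X^\eps_{t\wedge T_0\wedge T_M}\big)$ (here $s$ is the scale function of the $\eps$-diffusion, which is harmonic, $\LL s=0$) and letting the lower endpoint tend to $0$ — legitimate precisely because $s(0_+)>-\I$ — yields
\[
P_\eps[\ovl X^\eps>M]=P_\eps[T_M<T_0]=\fr{s(\eps)-s(0)}{s(M)-s(0)}.
\]
Letting $M\to r$ and invoking $s(r-)=\I$ keeps the numerator fixed and finite while the denominator diverges, which is exactly the chain of equalities \eqr{M1}. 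The only points needing care are the two event identifications above, both standard consequences of the boundary classification recalled from \cite{KT}.

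\emph{Part (B).} The idea is to remove $\|\s\|_\I<\I$ by localising to a compact subinterval, on which continuity of $\s$ gives boundedness for free. Fix $M\in(0,r)$, let $\s_M$ be a bounded globally Lipschitz modification of $\s$ coinciding with $\s$ on $[0,M]$, write $X^{\eps,M}$ for the solution of \eqr{main} with $\s$ replaced by $\s_M$, and $\delta^{\eps,M}_t$ for the associated defect $\Phi^{\eps,M}_t-\phi_t$ (the deterministic flow $\phi$ is unchanged, as it depends only on $\m$). Because $\s_M$ is bounded, the $L^2$ estimate \eqr{Gro}--\eqr{eBC} applies verbatim to $X^{\eps,M}$, giving $\delta^{\eps,M}_{t_1-t_c}\xri[\eps\to0]{L^2}0$, hence convergence in probability. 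On $\{\ovl X^\eps\le M\}$ the two SDEs share the same coefficients along the trajectory, so pathwise uniqueness (guaranteed by the Yamada--Watanabe hypothesis on \eqr{main}) forces $X^\eps\equiv X^{\eps,M}$, whence $\delta^\eps_{t_1-t_c}=\delta^{\eps,M}_{t_1-t_c}$ there. Splitting on this event gives, for every $\de>0$ and every $M<r$,
\[
P_\eps\big[|\delta^\eps_{t_1-t_c}|>\de\big]\le P_\eps\big[|\delta^{\eps,M}_{t_1-t_c}|>\de\big]+P_\eps\big[\ovl X^\eps>M\big],
\]
so that $\limsup_{\eps\to0}P_\eps[|\delta^\eps_{t_1-t_c}|>\de]\le\limsup_{\eps\to0}P_\eps[\ovl X^\eps>M]$ for each $M<r$.

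It remains to drive the right-hand side to $0$, and this is where I expect the real difficulty. Part (A) only controls $P_\eps[\ovl X^\eps>M]$ for \emph{fixed} $\eps$ as $M\to r$, whereas \eqr{eps1} sends $\eps\to0$ first, so the two limits must be interchanged: one must produce a single $M<r$ with $\limsup_{\eps\to0}P_\eps[\ovl X^\eps>M]=0$. This is not automatic. Writing $s'(y)=\exp\big(-\fr2\eps I(y)\big)$ with $I(y)=\int_0^y\fr{\m(z)}{\s^2(z)}\,dz$ (finite at $0$ since $\fr{\m}{\s^2}\to\fr{\g}{a'(0)}$ there), a Laplace analysis of the ratio $\fr{\int_0^\eps s'}{\int_0^M s'}$ as $\eps\to0$ shows that it tends to the positive escape constant $1-e^{-2\g/a'(0)}$ (the survival probability of the Feller branching limit \eqr{LTF}) as long as $I(M)>0$, and only collapses to $0$ once $I(M)<0$. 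Since $\m(x_c)=0$ with $\m<0$ beyond the stable point $x_c$, the function $I$ increases on $(0,x_c)$ and then decreases; when it eventually turns negative before $r$ — as in the logistic Feller example, where $I(y)=\g\big(y-\fr{y^2}{2x_c}\big)\to-\I$ — there is a threshold $M^\ast\in(x_c,r)$ with $I(M^\ast)=0$, and any $M\in(M^\ast,r)$ makes $\lim_{\eps\to0}P_\eps[\ovl X^\eps>M]=0$; inserting such an $M$ above closes the proof. The genuinely delicate point is thus the uniform-in-$\eps$ control of the overshoot probability: one must check that the localisation level may be pushed beyond the scale-balance point $M^\ast$ while staying strictly below $r$, which is exactly what the strong growth of $s$ near $r$ (encoded in $s(r-)=\I$) provides in the cases of interest; the localisation bookkeeping itself is routine.
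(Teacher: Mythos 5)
Your part (A) is exactly the paper's argument: the identity $P_\eps[\ovl X^\eps>M]=P_\eps[T_M<T_0]=\fr{s(\eps)-s(0)}{s(M)-s(0)}$ from \cite[Ch.~15]{KT}, followed by $s(r_-)=\I$. For part (B) you and the paper both split on the event $\{\ovl X^\eps\le M\}$, but you execute the first half more carefully: the paper conditions on $\{\ovl X^\eps_{T^\eps}\le M\}$ and invokes \eqr{eBC} for the conditional probability, whereas your bounded modification $\s_M$ combined with pathwise uniqueness is the clean way to justify that the bounded-volatility estimate \eqr{Gro} really applies on that event. That localisation step is sound.

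The genuine divergence is in the final limit, and there you have put your finger on a real issue. The paper closes by asserting $\lim_{\eps\to0}\fr{s(\eps)-s(0)}{s(M)-s(0)}=0$ for every fixed $M<r$, ``by continuity of the scale function,'' i.e.\ by letting $s(\eps)\to s(0)$; but the scale function of $X^\eps$ depends on $\eps$ through $s'(y)=e^{-\fr{2}{\eps}I(y)}$, $I(y)=\int_0^y \m/\s^2$, and your Laplace computation is correct: whenever $I>0$ on $(0,M]$ (in particular for all small $M$, and for all $M<2x_c$ in the logistic Feller case) the ratio tends to the positive escape constant $1-e^{-2\g/a'(0)}=P[W>0]$, not to $0$. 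This must be so on probabilistic grounds: on survival the process climbs to a neighbourhood of $x_c$, so $P_\eps[\ovl X^\eps>M]$ cannot vanish for $M<x_c$. Your repair --- choosing $M$ beyond the zero $M^\ast$ of $I$, where the denominator blows up exponentially while the numerator stays of order $\eps$ --- is the right mechanism, but note that it needs $I(M)<0$ for some $M<r$, which is strictly stronger than the stated hypothesis $s(r_-)=\I$ (for instance $r=\I$ with $I$ bounded and positive also gives $s(r_-)=\I$ yet defeats your choice of $M$). So your proof does not close under the lemma's hypotheses alone, but it does close under this mild strengthening, which holds for every example treated in Section \ref{s:LF}; the paper's own proof, by contrast, passes over this interchange of the limits $\eps\to0$ and $M\to r$ at exactly the point you isolate.
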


\begin{proof}
 \eqr{M1}  is straightforward. Indeed, recall that the boundary $0$ is attracting. Then,
 \begin{equation} P_\eps[\ovl X^\eps > M] = P_\eps[T_M <T_0]= \fr{s(\eps)-s(0)}{s(M)-s(0)} \end{equation} where $T_{0}, T_M$ are the hitting times of $X_t^\e$ at $0$ and $M$ -- see
 \cite[Ch. 15, (3.1), (3.10)]{KT}.
 Using now  the continuity of the scale function $s\cd$ \cite[Ch. 15, (3.5), (3.6)]{KT}
 (note that \cite{KT} denote the scale function by $S\cd$) yields $\lim_{M \to r} s(M)=s(r_-)=\I$ and the result.

 \eqr{eps1} follows by a similar argument. Indeed, denote the  deterministic and stochastic flows generated by the
ODE \eqref{detmain} and SDE \eqref{main} (i.e. the solutions of these equations at time $t$ that start at $x$ at
time $s$)  by $\phi_{s,t}(x)$ and $\Phi_{s,t}(x)$, respectively, and put $\Phi^\eps :=  \Phi_{t_c,t_1 }(X^\eps_{t_c})$ and $\phi^\eps := \phi_{t_c,t_1 }(X^\eps_{t_c})$ for brevity
and define  $\delta^\eps = \Phi^\eps-\phi^\eps$.   For fixed $\eps $ and $M$, \ith
 \begin{align*} \for \d >0, P_\eps[|\delta^\eps| >\d] &\leq P_\eps[\ovl X_{T^\eps}^\eps \leq M] P_\eps[|\delta^\eps| >\d|\ovl X_{T^\eps}^\eps \leq M] +
P_\eps[\ovl X_{T^\eps}^\eps > M]\\&\leq P_\eps[\ovl X_{T^\eps}^\eps \leq M] P_\eps[|\delta^\eps| >\d|\ovl X_{T^\eps}^\eps \leq M] +
P_\eps[\ovl X^\eps > M].
 \end{align*}
 Letting now $\eps $ to $0$ makes the first term go to $0$ by \eqr{eBC}, yielding
 \[
   \for M <r, \for \d >0, \limsup_{\eps \to 0} P_\eps[|\delta^\eps| >\d] \leq \lim_{\eps \to 0} \fr{s(\eps)-s(0)}{s(M)-s(0)}=0\]
 where we have used again the continuity of the scale function.

\end{proof}

\beT \la{tn} The conclusions of Theorem \ref{mainT} still hold under the assumptions of Lemma  \ref{det1}.
\eeT
\begin{proof}  Theorem \ref{mainT} of \cite{BCHK} only uses the assumption
$\|\sigma\|_\infty < \I$ in establishing the unnecessarily strong result \eqr{eBC}.  Providing weaker conditions for the weaker but still sufficient result \eqr{eps1} establishes therefore  our claim.

\end{proof}

\beQ \la{c:1} Note that essential use of $s(0) > -\I$ was made in \eqr{M1}. We conjecture however that a finer analysis will reveal that the result of  Theorem \ref{tn} still holds whenever $r$ is ``repelling/unattracting", more precisely when it is natural unattracting or entrance, cf. Feller's classification
of boundary points \cite[Ch. XV]{KT}.

\eeQ

\sec{Examples with $\lim_{t \to \I} X_t/x_t = 0$: The logistic
 Feller and Gilpin-Ayala diffusions \la{s:LF}}

 We recall now some famous examples for which the conditions of our Lemma \ref{det1} 
hold.
 The logistic
 Feller diffusion is defined by
\[
dX_t = \g X_t\Bigg(1  -  \; \fr{X_t}{x_c} \Bigg) dt + \sqrt{\eps X_t} dB_t, \; X_t \in (0,\I).
\]

The limit point $x_c$ of $x_t$    is a regular point for the diffusion; \wlg we will take it equal to $1$.  The scale density  $s'(x)=e^{-\fr{2\g}{\eps}(x- \fr{x^2}2)}$ is integrable at  $0$, but not at $\I$, and the speed density \cite{KT} $m'(x)=\fr{e^{\fr{2\g}{\eps}(x- \fr{x^2}2})}{\eps x}$  is integrable at  $\I$, but not at $0$, so that the conditions of Lemma \ref{det1} hold.\fn[4]{\Frt conform Feller's boundary classification \cite{KT},  $0$ is an exit boundary
since $s'(x) m[x,1]$ is integrable at $0$, and absorbtion in $0$ occurs  with  \proba \; 1, and $\I$
is an entrance (nonattracting) boundary,
since $m'(x) s[1,x] 
$ is integrable at $\I$--see also \cite{Cattiaux,bansaye2017diffusions} and \cite{foucart2017continuous} for the generalization to continuous-state branching processes with competition.}

 \Thr  fluid convergence with random initial point   before $T_\eps$ \cite{BCHK} still holds,  
 with the same  deterministic   flow and  random initial condition as for the Kimura-\FiW \; diffusion studied in \cite{BCHK}
\[
\phi_{t}(x) = \frac{xe^{\g t}}{1-x+xe^{\g t}}, \; \T \phi(x) =\frac{x}{1+x}, X_0   =\frac W{W+1}
\]
(since $\m(.),a'(0)$ did  not change)--see Figure \ref{f:lF2}.

\figu{lF2}{6 paths of the logistic Feller  diffusion ($x_c=1$ is regular) with $\eps=.01$, until $T_\eps$ and after}{0.7}

\figu{LWL}{6 paths of the logistic Feller and Kimura-Fisher-Wright diffusions with $\eps=1/20$, before and after $T_\eps$}{0.6}

In fact, the paths of the logistic Feller and Kimura-Fisher-Wright diffusions are almost indistinguishable up to $T^\eps$ of each other --see Figure  \ref{f:LWL}. 
 After reaching the neighborhood of $x_c$ however, the paths  split, reflecting  the different  natures (regular and exit) of $x_c$ for these two stochastic processes.

 Some  other examples of interest in population theory are the diffusion processes defined by
  the SDEs
 \begin{align*}  & d X_t = \g X_t \Big(1  -  \;(\fr{X_t}{x_c}^\th \Big) dt +  \s \sqrt{ X_t} d B_t, \s>, \th  >0,
 \\& d X_t = \Big[\g X_t \Big(1  -  \;\fr{X_t}{x_c} - \b \fr{X_t^{n-1}}{1+X_t^{n}}\Big)\Big] dt +  \s \sqrt{ X_t} d B_t, \b \geq 0, n\geq 1,
 \end{align*}
which are stochastic extensions with square root volatility of \det \ population \ models
    introduced by  Gilpin and Ayala and Holling respectively. 

 {It is easy to check that adding the exponents $\th$ and $n$
does not affect integrability  of the scale and speed densities of  these diffusions, so that
our extension applies.
\Frt the rescaled flow $\T \phi$ may be  computed numerically by \cite[Prop. 4.1]{BCHK} (and even symbolically for small integer values of $\th, n$).

Moving away from the square root volatility case, an interesting, still open question is to investigate whether analogues of the \cite{BCHK} result are available for the
processes \satg
$ d X_t =\g  X_t \Big(1  -  \; (\fr{ X_t}{x_c})^\th  \Big) dt + \se ( X_t)^\a d B_t, \qu \a >0.$\fn[4]{The  particular case $\a= \th =1$ is the famous Verlhurst-Pearl diffusion (VP)-- see for example \cite{liu2015sufficient}.}
\section{Sketch  of the proof of Theorem \ref{mainT} \cite{BCHK} \la{s:sk}}

Recall that  $\displaystyle t_c=c t_1 $ with  $c\in (1/2,1)$, arbitrary, and note
that
$X^\eps_{T^\eps} =\Phi_{t_c,t_1}(X^\eps_{t_c})=\Phi_{t_c,t_1}(\Phi_{t_c}(\eps))$.
The idea of the proof is to approximate this \rv by
\begin{equation}  X^\eps_{T^\eps} \approx \phi_{t_c,t_1}(\Phi_{t_c}(\eps))\xri{\eps \to 0} \T \phi(W),\end{equation}with the random variable $W$ from \eqref{W}.

The proof of \cite{BCHK} involves several steps
\BEN
\im The first idea for establishing the approximation $\T \phi(W)$ of $X^\eps_{T^\eps}$  is to  {\bf blow-up } the process near the boundary $0$
\[\T X^\eps_t : =\eps^{-1} X^\eps_t,
\] which fixes the initial condition to $1$ and
 changes the SDE to
\begin{equation}\label{mainsc}
d \T X^\eps_t =  \eps^{-1} \m(\eps \T X^\eps_t) dt + \sqrt{\fr{ a(\eps \T X^\eps_t)}\eps} dB_t,\quad t\ge 0,
\end{equation}
\iec that a subsequent {\bf linearization of the SDE} yields
\[ \T X^\eps_t \approx Y_t\]
where $Y_t$ is a
{\bf Feller branching diffusion}  started from $1$, defined by
\begin{equation}\label{Feller}
Y_t=1+\int_0^t \m'(0) Y_sds+\int_0^t\sqrt Y_s dB_s, \quad t\ge 0.
\end{equation}
One may take advantage then of  the well-known \nne martingale convergence theorem for the ``scaled final position" of the branching process $Y_t$
\begin{equation} W:=\lim_{t \to \I} e^{- \m'(0) t}  Y_{t}.\la{LTF} \end{equation}\beR
Let us note that the linearization for processes satisfying $a(x)=O(x^2)$ and failing Assumption \ref{a:B}, like the linear Gilpin-Ayala \eqr{lGA}, leads to geometric Brownian motion. In this case,  \eqr{LTF}
holds with $W=0$, and a different approach seems necessary.
\eeR

\im After ``blowing up" the beginning of the path, the second idea is to  {\bf ``look from far away"}. We want to    break
 the trajectory at a suitably chosen time point
 \begin{equation} t_c < t_1=T^\eps=\displaystyle\frac{1}{\g} \log\frac 1 \eps
 \end{equation}such that
 before $t_c$, the original process  is close to Feller's branching diffusion \eqr{Feller}, and convergence to the limit $W$ of the Feller diffusion occurs, i.e.
\begin{equation} \la{e:ess} X^\eps_{t_c}=\eps \T X^\eps_{t_c} =e^{-\g t_1} \T X^\eps_{t_c} \approx e^{-\g t_1} Y_{t_c} = e^{-\g (t_1-\t_c)} e^{-\g t_c} Y_{t_c}\approx e^{-\g (t_1-\t_c)} W. \end{equation}The first approximation
 $e^{-\g t_c} \T X^\eps_{t_c}   \xrightarrow[\eps\to 0]{L^1}Y_{t_c}$
 follows from the following lemma \cite{BCHK} showing that the solution of \eqref{main} converges, under appropriate scaling,
to the Feller branching diffusion \eqref{Feller}.

\begin{lem} \label{L1}
Let $\T X^\eps_t : =\eps^{-1} X^\eps_t$, where $X^\eps_t$ is the solution of \eqref{main} subject to $X^\eps_0=\eps$. Then
\[
\T X^\eps_t \xrightarrow[\eps\to 0]{L^1} Y_t, \quad \forall\, t\ge 0,
\]
where $Y_t$ is the solution of \eqref{Feller}.
\end{lem}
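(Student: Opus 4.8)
The plan is to realise $\T X^\eps$ and $Y$ on the same probability space, driven by the same Brownian motion $B$ and started from the common value $1$, and then to estimate $E|\T X^\eps_t-Y_t|$ by a Gronwall argument of Yamada--Watanabe type, the non-Lipschitz square-root volatility ruling out a direct $L^2$ bound. First I would decompose the coefficients of the scaled equation \eqref{mainsc} into a linear part and a vanishing remainder: since $\m(0)=a(0)=0$ and $a'(0)=1$, one has $\eps^{-1}\m(\eps y)=\m'(0)\,y+y\,\rho(\eps y)$ and $\eps^{-1}a(\eps y)=y+y\,\eta(\eps y)$, where $\rho(u)=\m(u)/u-\m'(0)\to0$ and $\eta(u)=a(u)/u-1\to0$ as $u\to0$; the drift bound $|\m(y)-\m(x)|\le\m'(0)|y-x|$ of Theorem \ref{mainT} moreover yields $|\m(u)|\le\m'(0)|u|$, hence $|\rho|\le2\m'(0)$. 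Next I would record moment bounds uniform in $\eps$: from $|\eps^{-1}\m(\eps y)|\le\m'(0)y$ and $\T X^\eps\ge0$, taking expectations in \eqref{mainsc} and applying Gronwall gives $\sup_\eps E[\T X^\eps_t]\le e^{\m'(0)t}$, while It\^o applied to $(\T X^\eps)^2$ (and, by iteration, to higher powers) gives $\sup_\eps E[(\T X^\eps_t)^p]<\I$ on compact time sets for every $p$. In particular $E[\eps\T X^\eps_t]\le\eps\,e^{\m'(0)t}\to0$, so that $\eps\T X^\eps_t\to0$ in probability.

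To treat the square root I would use the Yamada--Watanabe functions $\phi_n$: even, $C^2$, with $\phi_n(x)\uparrow|x|$, $0\le\phi_n'\le1$, $|x|\le\phi_n(x)+\beta_{n-1}$, and $\phi_n''$ supported on $\{\beta_n\le|x|\le\beta_{n-1}\}$ with $|x|\phi_n''(x)\le2/n$, where $\beta_n\downarrow0$. Writing $D^\eps_t=\T X^\eps_t-Y_t$, so $D^\eps_0=0$, It\^o's formula together with the vanishing of the martingale expectation gives
\[
E\,\phi_n(D^\eps_t)\le E\!\int_0^t\!\phi_n'(D^\eps_s)\big(\eps^{-1}\m(\eps\T X^\eps_s)-\m'(0)Y_s\big)ds+\frac12 E\!\int_0^t\!\phi_n''(D^\eps_s)\Big(\sqrt{\eps^{-1}a(\eps\T X^\eps_s)}-\sqrt{Y_s}\Big)^{2}ds.
\]
The drift difference equals $\m'(0)D^\eps_s+\T X^\eps_s\rho(\eps\T X^\eps_s)$, so the first term is at most $\m'(0)\int_0^tE|D^\eps_s|\,ds+\delta_1(\eps)$ with $\delta_1(\eps):=E\int_0^t\T X^\eps_s|\rho(\eps\T X^\eps_s)|\,ds$. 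For the correction term I would apply $(\sqrt u-\sqrt v)^2\le|u-v|$ with $u=\eps^{-1}a(\eps\T X^\eps_s)\ge0$, $v=Y_s\ge0$, which gives $\big(\sqrt{\eps^{-1}a(\eps\T X^\eps_s)}-\sqrt{Y_s}\big)^{2}\le|D^\eps_s|+\T X^\eps_s|\eta(\eps\T X^\eps_s)|$; using $|D|\phi_n''(D)\le2/n$ and $\phi_n''(D)\le2/(n\beta_n)$ on its support, the correction is bounded by $t/n+\frac1{n\beta_n}\delta_2(\eps)$, where $\delta_2(\eps):=E\int_0^t\T X^\eps_s|\eta(\eps\T X^\eps_s)|\,ds$.

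It remains to show that the $\eps$-remainders vanish. Since $\eps\T X^\eps_s\to0$ in probability and $\rho,\eta$ are continuous at $0$ with $\rho(0)=\eta(0)=0$, the integrands $\T X^\eps_s|\rho(\eps\T X^\eps_s)|$ and $\T X^\eps_s|\eta(\eps\T X^\eps_s)|$ tend to $0$ in probability; by the uniform moment bounds (which make the relevant families uniformly integrable) and dominated convergence in $s$ one gets $\delta_1(\eps)\to0$ and $\delta_2(\eps)\to0$. Collecting the estimates and using $E|D^\eps_\tau|\le E\phi_n(D^\eps_\tau)+\beta_{n-1}$, I obtain, for every $\tau\le t$,
\[
E|D^\eps_\tau|\le \m'(0)\int_0^\tau E|D^\eps_s|\,ds+\delta_1(\eps)+\frac{t}{n}+\frac1{n\beta_n}\delta_2(\eps)+\beta_{n-1},
\]
so Gronwall yields $E|D^\eps_t|\le\big(\delta_1(\eps)+t/n+\frac1{n\beta_n}\delta_2(\eps)+\beta_{n-1}\big)e^{\m'(0)t}$. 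Letting $\eps\to0$ at $n$ fixed kills the terms carrying $\delta_1$ and $\delta_2$ (the factor $1/(n\beta_n)$ being constant once $n$ is fixed), so $\limsup_{\eps\to0}E|\T X^\eps_t-Y_t|\le(t/n+\beta_{n-1})e^{\m'(0)t}$; finally letting $n\to\I$ gives the claimed $L^1$ convergence.

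The main obstacle is exactly the $\frac12$-H\"older square-root coefficient, which forces the Yamada--Watanabe regularisation instead of a naive $L^2$-Gronwall estimate, and whose bookkeeping is delicate: the volatility remainder $\eta$, once passed through $\phi_n''$, is amplified by $1/\beta_n$, so the two limits must be taken in the order $\eps\to0$ first and $n\to\I$ afterwards, which in turn relies on the moment bounds on $\T X^\eps$ being uniform in $\eps$. The remaining care is to check that $\rho$ and $\eta$ are dominated uniformly in $\eps$ (so that $\delta_1,\delta_2\to0$), which is where the global drift condition and the structural behaviour of $a=\s^2$ near $0$ are used.
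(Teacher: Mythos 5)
The paper itself does not prove Lemma \ref{L1}: it is quoted from \cite{BCHK} inside the proof sketch of Section \ref{s:sk}, so there is no in-paper argument to compare yours against. Your proposal --- coupling $\T X^\eps$ and $Y$ through the same driving Brownian motion and running a Yamada--Watanabe/Gronwall estimate in $L^1$, with the linear-plus-remainder decomposition of the scaled coefficients of \eqref{mainsc} --- is the standard and correct route for coefficients that are only $1/2$-H\"older, and your bookkeeping (uniform-in-$\eps$ moment bounds, the order of limits $\eps\to0$ first and then $n\to\infty$ to neutralize the $1/(n\beta_n)$ amplification of the volatility remainder) is handled properly. The one point worth making explicit is that the global bound on $\eta(u)=a(u)/u-a'(0)$ rests on the boundedness of $\sigma$ and $\sigma'$ assumed in Theorem \ref{mainT}; since the present paper's stated goal is to drop $\|\sigma\|_\infty<\infty$, and it asserts that this hypothesis enters only in \eqref{eBC}, your argument would need a localization (stopping $\T X^\eps$ before $X^\eps$ leaves a fixed neighbourhood where $a(u)\le Cu$) to cover the relaxed setting --- a minor but nontrivial addition that your write-up does not address.
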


  Putting these together yields
 \(
 \phi_{t_c, t_1}(X_{t_c}^\eps)\xrightarrow[\eps\to 0]{\P} \T \phi(W).
 \)

 \im  The hardest part is proving that  in the
second portion $[t_c,t_1]$, the influence of the stochasticity is negligible,
\fe that  $
  \Phi_{t_c,t_1}(X^\eps_{t_c}) - \phi_{t_c, t_1}(X^\eps_{t_c})    \xrightarrow[\eps \to 0]{L^2} 0
$, as proved in  \cite{BCHK} under the restrictive assumption $\|\sigma\|_\infty < \I$.

\EEN

Putting it all together in one line, one must prove that
 \begin{equation} X^\eps_{t_1}=\Phi_{t_c, t_1}(X_{t_c}^\eps)\approx \Phi_{t_c, t_1}(W e^{-\g (t_1-\t_c)})\approx \phi_{t_c, t_1}( W e^{-\g (t_1-\t_c)})\xrightarrow[\eps\to 0]{\P} \T \phi(W).\end{equation}
To extend \cite{BCHK}, it is sufficient to improve the third \app \; step above.

{\bf Acknowledgement:} We thank  J.L. Perez for useful remarks and the referee for the help in improving the exposition.

\bibliographystyle{acmurl}

\begin{address}
  Florin Avram and Jacky Cresson\\
  CNRS / UNIV PAU \& PAYS ADOUR/LMAP - IPRA, UMR5142  \\
  64000, PAU, FRANCE \\
  \texttt{Florin.Avram@orange.fr} and \texttt{jacky.cresson@univ-pau.fr}
\end{address}

\end{document}